\documentclass[11pt]{amsart}
\usepackage{amsfonts}
\usepackage{amsmath}
\usepackage{amssymb}
\usepackage{amscd}
\usepackage{graphicx}
\usepackage{fancyhdr}
\usepackage{color}
\usepackage{graphicx}
\usepackage{gloss}

\usepackage{mathtools}               

\usepackage[normalem]{ulem}

\usepackage{lipsum}                         

\usepackage{mathrsfs}              

\usepackage{tikz-cd}           

\usepackage{float}

\usepackage{extarrows}          

\input xy
\xyoption{all}

\newtheorem{theorem}{Theorem}

\newtheorem*{main theorem}{Main Theorem}

\newtheorem{definition}[theorem]{Definition}

\newtheorem{lemma}[theorem]{Lemma}

\newtheorem{proposition}[theorem]{Proposition}
\newtheorem{remark}[theorem]{Remark}

\begin{document}

\title{Simplicial volume of fiber bundles with nonpositively curved fibers}

\author{Xiaofeng Meng}
\address{School of Mathematical Sciences, Fudan University, Shanghai 200433, China}
\email{xfmeng17@fudan.edu.cn}

\subjclass[2020]{Primary 53C23; Secondary 57N65}

\date{}

\dedicatory{}

\commby{}

\begin{abstract}
We prove the simplicial volume of the total space of a smooth fiber bundle with fiber being an oriented closed connected (occ)
manifold of nonpositive curvature and negative Ricci curvature over an occ manifold with a closed universal covering is zero.
Furthermore, if the fiber is an occ negatively curved manifold with dimension more than $2$, the simplicial volume of the total space is zero if and only if the simplicial volume of the base space is zero.
\end{abstract}
\maketitle
\section{Introduction}
The simplicial volume is also called the Gromov norm in literature.
It is a topological invariant of manifolds introduced by Thurston \cite{thurston1978geometry}
and Gromov \cite{gromov1982volume}. For an oriented closed connected $n$-dimensional manifold $M$, its
simplicial volume is defined as
\[
\|M\|=\mathrm{inf} \left \{ \sum_{i}|a_{i}|\ \middle | \  \bigl[\sum_{i}a_{i}\sigma_{i}\bigr]=[M]\in
H_{n} (M,\mathbb{R}  ) \right \},
\]
where $[M]\in H_n(M;\mathbb R)$ is the fundamental class of $M$ with real coefficients.

For the product $M\times N$ of two closed manifolds $M$ and $N$ with dimensions $m$ and $n$ respectively,
Gromov \cite{gromov1982volume} proved the relation:
\begin{equation}\label{2021052101}
    \|M\|\cdot\|N\| \leq \|M\times N\| \leq \binom{n+m}{n}\|M\|\cdot\|N\|.
\end{equation}
For a nontrivial fiber  bundle, Gromov
proved in \cite{gromov1982volume} that for a smooth fiber bundle whose fiber and base are occ manifolds, the total space has zero
simplicial volume if the fiber is amenable, i.e., its fundamental group is an amenable group.
In some very special cases, there is a relation between the simplicial volume of the total space
and the product of the simplicial volumes of fiber and base
proved by Hoster in \cite{hoster2001simplicial} and Bucher in \cite{bucher2009simplicial}.

However, one cannot expect a relation as (\ref{2021052101}) for a  fiber bundle in general.
Hoster and Kotschick \cite{hoster2001simplicial} gave an example where a closed hyperbolic $3$-manifold is a fiber bundle over the circle.
It is well-known that the simplicial volume of a closed hyperbolic manifold is non-zero (see \cite{gromov1982volume} or \cite{thurston1978geometry}), while the simplicial volume of the cicle is zero (by \cite{gromov1982volume} the simplicial volume of an amenable occ manifold is zero). 

Nevertheless, we can still get some results by making assumptions on the base space.
For example,  L{\"o}h and Moraschini proved vanishing results for certain mapping tori in \cite{lm21}. 
Kastenholz and Reinhold \cite{kr21} proved vanishing results when the base space is $S^d$ with $d\geq 2$ and when the base space is a flexible 2-connected manifold with additional assumptions on fiber.

In this paper, we use the methods in \cite{bfj16} and \cite{fg16} to give several vanishing and non-vanishing results with fibers being occ negatively curved manifolds and some more generalized results.

Let
\begin{equation*}
    \begin{CD}
M @>>>E \\
@. @VpVV \\
@. B
    \end{CD}
\end{equation*}
be a smooth fiber bundle over an occ manifold with occ fiber.
If $M$ is equipped with a Riemannian metric of nonpositive sectional curvature and negative Ricci curvature, we have the following theorem.
\begin{theorem}\label{thm:ricci<0}
Let $M$ be an occ curved manifold with $K\leq 0$ and $Ricci<0$. 
Let $E$ be a smooth fiber bundle over an occ manifold $B$ with fiber $M$.
If the universal covering of $B$ is closed.
Then $\|E\|=0$.
\end{theorem}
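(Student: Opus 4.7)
The plan is to reduce to the case of a simply connected base by passing to a finite cover, then use rigidity of the fiber to trivialize the bundle, and finally apply the product inequality together with Gromov's amenable vanishing theorem.

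The hypothesis that the universal cover $\tilde B$ of $B$ is closed is equivalent to $\pi_1(B)$ being finite of some order $d$. Pulling back $p\colon E\to B$ along $\tilde B \to B$ yields a smooth fiber bundle $M \to \tilde E \to \tilde B$ whose total space $\tilde E$ is a $d$-fold cover of $E$. By the multiplicativity of simplicial volume under finite coverings, $\|\tilde E\|=d\cdot\|E\|$, so it suffices to prove that $\|\tilde E\|=0$.

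The assumptions $K\leq 0$ and $Ricci<0$ put $M$ in a very rigid class: the universal cover $\tilde M$ has no Euclidean factor in its de Rham decomposition, the center of $\pi_1(M)$ is trivial, and $\mathrm{Isom}(\tilde M)/\pi_1(M)$ is finite. My plan is to invoke the methods of \cite{bfj16} and \cite{fg16}, which exploit precisely this sort of rigidity, to show that the bundle $M\to\tilde E\to\tilde B$ over the simply connected base $\tilde B$ becomes a product after a further finite cover $\tilde E''\to\tilde E$; that is, $\tilde E''\cong M'\times\tilde B$ for some finite cover $M'$ of $M$. Granting this, Gromov's product inequality (\ref{2021052101}) gives
\[
\|\tilde E''\|\leq \binom{\dim E}{\dim M}\,\|M'\|\,\|\tilde B\|,
\]
and because $\tilde B$ is closed with trivial (hence amenable) fundamental group, Gromov's amenable vanishing theorem yields $\|\tilde B\|=0$. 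Multiplicativity under finite covers then produces $\|\tilde E\|=0$ and hence $\|E\|=0$.

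The main obstacle is the trivialization step, which amounts to showing that the classifying map $\tilde B\to B\mathrm{Diff}(M)$ of the pulled-back bundle is virtually null-homotopic. This requires control both of the mapping class group $\pi_0\mathrm{Diff}(M)$ (related to $\mathrm{Out}(\pi_1(M))$) and of the relevant higher homotopy of $B\mathrm{Diff}(M)$; the absence of a Euclidean factor in the de Rham decomposition of $\tilde M$, which is exactly what $Ricci<0$ guarantees, is what tames these homotopy groups sufficiently for the techniques of \cite{bfj16,fg16} to apply.
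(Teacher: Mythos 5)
Your outer skeleton matches the paper's: pass to the universal cover $\tilde B\to B$ (a finite cover, since $\tilde B$ is closed), use multiplicativity of $\|\cdot\|$ under finite covers, trivialize the pulled-back bundle over the now simply connected base, and finish with the product inequality (\ref{2021052101}) together with $\|\tilde B\|=0$ from amenability. Your reading of the role of $Ricci<0$ is also essentially correct: via the Lawson--Yau center theorem \cite{ly72} it rules out a torus factor and forces $Center(\pi_1(M))=0$, which is exactly how the paper reduces Theorem \ref{thm:ricci<0} to Theorem \ref{thm:uc of b closed}.

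The gap is the trivialization step, which you leave entirely to ``the methods of \cite{bfj16} and \cite{fg16}'' and yourself name as the main obstacle. What you assert there --- that after a further finite cover the bundle becomes an honest product $M'\times\tilde B$ --- is both stronger than needed and not what those methods deliver; worse, the mechanism you point to (finiteness of $\pi_0\mathrm{Diff}(M)$ via $\mathrm{Out}(\pi_1(M))$, plus control of the higher homotopy of $B\mathrm{Diff}(M)$) is not available under the hypotheses of Theorem \ref{thm:ricci<0}: a product of two closed hyperbolic surfaces has $K\leq 0$ and $Ricci<0$ but infinite $\mathrm{Out}(\pi_1(M))$, so the finite-cover argument of \cite{bfj16} (which the paper uses only for Theorem \ref{thm:all B}, where finiteness of $\mathrm{Out}(\pi_1(M))$ is an explicit hypothesis) does not apply here. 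The paper's actual argument is much softer and avoids $\mathrm{Diff}(M)$ entirely: triangulate the simply connected base, contract the $1$-skeleton, lift the contraction by the covering homotopy theorem, and extend cell by cell over the higher skeleta; the obstructions live in $\pi_n(G(M))$, the homotopy groups of the space of self homotopy equivalences of $M$, and by Gottlieb \cite{g65} these equal $Center(\pi_1(M))=0$ for $n=1$ and vanish for $n\geq 2$ because $M$ is aspherical. This produces a fiber homotopy trivialization $q:r^\ast E\to p^{-1}(x_0)$, hence $E$ is homotopy equivalent to $M\times\tilde B$ \cite{dl59}, which is all that the product inequality requires. To repair your proof, replace the appeal to \cite{bfj16} and \cite{fg16} by this obstruction-theoretic argument, or simply by the statement of Theorem \ref{thm:uc of b closed}.
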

We have the following more general Theorem.
\begin{theorem}\label{thm:uc of b closed}
Let $M$ be an occ aspherical manifold with the center of its fundamental group
$Center(\pi_1(M))=0$. 
Let $E$ be a smooth fiber bundle over an occ manifold $B$ with fiber $M$.
If the universal covering of $B$ is closed.
Then $\|E\|=0$.
\end{theorem}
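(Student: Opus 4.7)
The plan is to pass to a finite cover of $E$ in which the base is simply connected, show that the pulled-back bundle is fiber-homotopy trivial, and then conclude via the product inequality together with Gromov's vanishing theorem for amenable fundamental groups. Implicit throughout is the mild side assumption $\dim B \geq 1$, since if $\dim B = 0$ then $B$ is a point, $E=M$, and the statement can fail.

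Since both $B$ and its universal covering $\tilde B$ are closed and connected, the deck transformation group $\pi_1(B)$ acts freely and cocompactly on the compact manifold $\tilde B$, and is therefore finite; write $d=|\pi_1(B)|$. Pulling the bundle back along the covering $q\colon \tilde B\to B$ yields a smooth fiber bundle $\tilde E\to \tilde B$ with fiber $M$, and the induced map $\tilde E\to E$ is a $d$-sheeted covering of occ manifolds. By the standard multiplicativity of simplicial volume under finite covers, $\|\tilde E\|=d\,\|E\|$, so it is enough to prove $\|\tilde E\|=0$.

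Next I would invoke Gottlieb's theorem: for aspherical $M$ the identity component $\mathrm{Map}(M,M)_{\mathrm{id}}$ is a $K(Z(\pi_1(M)),1)$, which under our hypothesis $Z(\pi_1(M))=0$ is weakly contractible; the same is then true of the identity component $\mathrm{haut}_{0}(M)$ of the monoid of self-homotopy equivalences. Consequently $B\mathrm{haut}(M)\simeq K(\mathrm{Out}(\pi_1(M)),1)$ is aspherical. The smooth bundle $\tilde E\to \tilde B$ underlies a Hurewicz fibration classified by a map $\tilde B\to B\mathrm{haut}(M)$; since $\tilde B$ is simply connected and the target is aspherical, this classifying map is null-homotopic. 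Therefore the fibration is fiber-homotopy equivalent to the trivial one, yielding a homotopy equivalence $\tilde E\simeq \tilde B\times M$.

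Finally, the product inequality (\ref{2021052101}) together with Gromov's vanishing theorem gives
\[
\|\tilde E\| = \|\tilde B\times M\| \leq \binom{\dim\tilde B+\dim M}{\dim M}\,\|\tilde B\|\cdot\|M\| = 0,
\]
because $\tilde B$ is a simply connected occ manifold of positive dimension whose trivial fundamental group is amenable, so $\|\tilde B\|=0$. Hence $\|E\|=0$. The main obstacle I expect is the fiber-homotopy triviality step: one must justify rigorously that the smooth bundle induces a Hurewicz fibration classified by $B\mathrm{haut}(M)$ and invoke the right form of Gottlieb's contractibility theorem for aspherical manifolds with trivial center. The remaining inputs---multiplicativity under finite covers, the product inequality, and Gromov's amenable vanishing theorem---are classical and can be cited directly.
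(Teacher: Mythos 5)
Your proposal is correct and follows the same overall strategy as the paper: pass to the (finite) universal cover of $B$ using multiplicativity of $\|\cdot\|$ under finite coverings, establish that the pulled-back bundle over the simply connected base is fiber-homotopy trivial via Gottlieb's computation of the homotopy groups of the space of self-homotopy equivalences of an aspherical manifold, and conclude with the product inequality (\ref{2021052101}) and the vanishing of the simplicial volume of a simply connected manifold. The one place you diverge is in how fiber-homotopy triviality is obtained. The paper triangulates the simply connected base, contracts the $1$-skeleton, lifts that contraction with the covering homotopy theorem, and then extends the resulting map $p^{-1}(K^1)\to p^{-1}(x_0)$ cell by cell over the higher skeleta, the obstructions living in $\pi_n(G(M))$ for $n\geq 2$, which vanish by Gottlieb. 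You instead package the same input through the Dold--Lashof classification: since the identity component of $\mathrm{haut}(M)$ is weakly contractible when $Center(\pi_1(M))=0$, the classifying space $B\mathrm{haut}(M)$ is aspherical, so any classifying map from a simply connected base is null-homotopic. Your route is cleaner and is in fact closer in spirit to the paper's proof of Theorem \ref{thm:all B}, at the cost of invoking the classification of fibrations up to fiber homotopy equivalence (which, as you note, is the step requiring the most care); the paper's route is more hands-on and only needs the covering homotopy theorem plus elementary obstruction theory. Your side remark that the statement requires $\dim B\geq 1$ (otherwise $E=M$ and the conclusion can fail) is a legitimate observation that the paper leaves implicit, since Gromov's amenable vanishing theorem does not apply to a point.
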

If $M$ is equipped with a negatively curved metric, we have the following theorem.
\begin{theorem}
Let $M$ be an ooc negatively curved manifold with dimension more than $2$. 
Let $E$ be a fiber bundle over an ooc manifold $B$ with fiber $M$.
Then $\|E\|=0$ if and only if $\|B\|=0$.
\end{theorem}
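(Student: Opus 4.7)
The plan is to exploit rigidity of the fiber: for a closed negatively curved manifold $M$ of dimension at least $3$, the outer automorphism group $\mathrm{Out}(\pi_1(M))$ is finite (by Mostow rigidity in the hyperbolic case, and by Paulin's theorem combined with the non-splittability of $\pi_1(M)$ over virtually cyclic subgroups in the general negatively curved case). This will force the bundle to become a product after passing to a finite cover, at which point Gromov's product inequality (\ref{2021052101}) finishes the argument.

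First, since $M$ is aspherical with $\mathrm{Center}(\pi_1(M))=0$, the classifying space $B\,\mathrm{haut}(M)$ of the monoid of self-homotopy equivalences of $M$ is a $K(\mathrm{Out}(\pi_1(M)),1)$. Hence the bundle $p\colon E\to B$ is classified, up to fiber homotopy equivalence, by a homomorphism $\rho\colon \pi_1(B)\to\mathrm{Out}(\pi_1(M))$. I would let $B'\to B$ be the finite cover corresponding to $\ker\rho$; its degree $d=|\mathrm{Im}\,\rho|$ is finite because $\mathrm{Out}(\pi_1(M))$ is finite. The pullback bundle $E'\to B'$ then has trivial monodromy in $\mathrm{Out}(\pi_1(M))$, so $E'$ is fiber homotopy equivalent to the trivial product $B'\times M$. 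In particular, $E'$ and $B'\times M$ are homotopy equivalent closed manifolds of the same dimension, so they have equal simplicial volumes.

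Next, using multiplicativity of simplicial volume under finite coverings, I get $\|E\|=\tfrac{1}{d}\|E'\|=\tfrac{1}{d}\|B'\times M\|$ and $\|B\|=\tfrac{1}{d}\|B'\|$. Applying the product inequality (\ref{2021052101}) together with $\|M\|>0$ (Gromov--Thurston, for negatively curved closed manifolds), one obtains $\|B'\times M\|=0$ if and only if $\|B'\|=0$. Chaining these equivalences yields $\|E\|=0$ if and only if $\|B\|=0$, which is the desired conclusion.

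The main obstacle is the bundle-classification step: justifying that triviality of the monodromy $\rho$ implies fiber homotopy triviality of $E'\to B'$ relies on the identification $B\,\mathrm{haut}(M)\simeq K(\mathrm{Out}(\pi_1(M)),1)$, which itself uses asphericity and centerlessness of $\pi_1(M)$, together with a clean reference for finiteness of $\mathrm{Out}(\pi_1(M))$ for closed negatively curved manifolds of dimension at least three. The hypothesis $\dim M>2$ enters precisely here: negatively curved surfaces have infinite mapping class group, so the reduction to a virtual product breaks down and a different argument would be required.
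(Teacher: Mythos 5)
Your argument is correct and is essentially the paper's own: the paper deduces this statement from its Theorem \ref{thm:all B}, whose proof likewise passes to a finite cover $B'\to B$ on which the pulled-back bundle becomes fiber homotopy trivial (using finiteness of $\mathrm{Out}(\pi_1(M))$, Gottlieb's computation of $\pi_\ast(G(M))$ with asphericity and triviality of the center, and the weak equivalence $B_{G(M)}\to B_{\pi_0(G(M))}$), and then concludes with the product inequality (\ref{2021052101}), positivity of $\|M\|$, and multiplicativity of simplicial volume under finite coverings. The only cosmetic difference is that you read off the finite cover directly from the monodromy $\rho:\pi_1(B)\to\mathrm{Out}(\pi_1(M))$ via $B\,\mathrm{haut}(M)\simeq K(\mathrm{Out}(\pi_1(M)),1)$, whereas the paper obtains the same reduction by pulling back the finite covering $\mathrm{BDiff}_0(M)\to\mathrm{BDiff}(M)$ along the classifying map.
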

It follows directly from the following Theorem.
\begin{theorem}\label{thm:all B}
Let $p:E\rightarrow B$ be an oriented smooth fiber bundle over an occ manifold $B$ with fiber $M$ being an occ manifold. 
If $M$ satisfies the following conditions
\begin{itemize}
    \item[(a)] $dim(M)\geq 3$;
    \item[(b)] $M$ is nonpositively curved closed manifold;
    \item[(c)] $Out(\pi_1(M))$ is finite;
    \item[(d)] $Center(\pi_1(M))=1$.
\end{itemize}
Then $\|E\|=0$ if and only if $\|M\|\cdot\|B\|=0$.
\end{theorem}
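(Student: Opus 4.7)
The plan is to prove the equivalence by establishing a finite-cover trivialization of the bundle up to fiber-homotopy equivalence. Concretely, I will show that under (a)--(d), there is a finite cover $\tilde B\to B$ of some degree $d$ such that the pulled-back bundle $\tilde E\to\tilde B$ is fiber-homotopy equivalent to the product bundle $M\times\tilde B\to\tilde B$; since the simplicial volume of closed manifolds is a homotopy invariant, this gives $\|\tilde E\|=\|M\times\tilde B\|$. Combining this with Gromov's product inequality (\ref{2021052101}) and the multiplicativity of simplicial volume under finite coverings ($\|\tilde E\|=d\|E\|$ and $\|\tilde B\|=d\|B\|$), I obtain
\[
\|M\|\cdot d\|B\|\;\leq\;d\|E\|\;\leq\;\binom{m+n}{m}\|M\|\cdot d\|B\|,
\]
from which both directions of the equivalence $\|E\|=0\iff\|M\|\cdot\|B\|=0$ follow at once.

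For the construction of $\tilde B$, I would examine the homotopy exact sequence
\[
1\to\pi_1(M)\to\pi_1(E)\to\pi_1(B)\to 1
\]
of the fibration, and extract the monodromy $\rho:\pi_1(B)\to\mathrm{Out}(\pi_1(M))$ obtained from the outer conjugation action. Hypothesis (c) guarantees $\rho$ has finite image, so the finite cover $\tilde B\to B$ of degree $d=|\mathrm{im}\,\rho|$ associated to $\ker\rho\leq\pi_1(B)$ pulls the bundle back to one whose monodromy is trivial.

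To upgrade trivial monodromy to trivial fibration, the key topological input is as follows. Because $M$ is aspherical (from (b)) with centerless fundamental group (from (d)), every component of $\mathrm{Map}(M,M)$ that contains a self-homotopy equivalence is a $K\bigl(Z_{\pi_1(M)}(\phi),1\bigr)$ for some automorphism $\phi$ of $\pi_1(M)$, and $Z_{\pi_1(M)}(\phi)=Z(\pi_1(M))=1$ by (d). Consequently the topological monoid $\mathrm{aut}(M)$ of self-homotopy equivalences of $M$ is weakly equivalent to the discrete group $\mathrm{Out}(\pi_1(M))$, so its classifying space satisfies $B\mathrm{aut}(M)\simeq K(\mathrm{Out}(\pi_1(M)),1)$. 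Fibrations over $\tilde B$ with fiber $M$ are then classified up to fiber-homotopy equivalence by conjugacy classes of homomorphisms $\pi_1(\tilde B)\to\mathrm{Out}(\pi_1(M))$, and the triviality of the monodromy of $\tilde E\to\tilde B$ forces $\tilde E\simeq M\times\tilde B$.

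The main obstacle will be the clean formulation of this classification theorem and its rigorous application to the smooth bundle at hand; while the component-wise analysis of $\mathrm{Map}(M,M)$ is classical for $K(\pi,1)$-spaces, invoking the classifying space $B\mathrm{aut}(M)$ and its identification with $K(\mathrm{Out}(\pi_1(M)),1)$ merits careful justification. The dimension hypothesis (a) appears primarily to rule out low-dimensional pathologies such as the surface case, where $\mathrm{Out}(\pi_1(M))$ would typically fail to be finite and (c) would already be violated; the nonpositive curvature (b), beyond providing asphericity of the fiber, ensures that the conjugation picture on $\pi_1(M)$ is geometrically well-behaved, permitting the identification of the monodromy in the first place.
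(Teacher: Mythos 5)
Your proposal is correct and follows essentially the same route as the paper: pass to a finite cover of $B$ killing the outer monodromy (using (c)), use Gottlieb's theorem together with (b) and (d) to show $G(M)$ is weakly equivalent to the discrete group $\mathrm{Out}(\pi_1(M))$ so that the pulled-back bundle is fiber-homotopy trivial, and conclude via Gromov's product inequality and multiplicativity of simplicial volume under finite covers. The only cosmetic difference is that the paper realizes the finite cover as the pullback of $\mathrm{BDiff}_0(M)\rightarrow\mathrm{BDiff}(M)$ along the classifying map rather than as the cover associated to $\ker\rho$, but these determine the same subgroup of $\pi_1(B)$.
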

\begin{remark}
Same as in \cite{bfj16}, the following two categories of manifolds satisfies assumption (a)-(d) above.
\begin{itemize}
    \item [a)]$M$ is an occ negatively curved manifolds with dimension at least 3.
    \item[b)] $M$ is a nonpositively curved locally symmetric space of noncompact type, such that it has no finite sheet cover $\hat{M}=A\times B$ where $dim(A)=2$. 
\end{itemize}
\end{remark}

We arrange the rest of this paper as follows. In Section 2 we first recall the center Theroem, the definition of fiber homotopocally trivial and a feature about CW-complex. Then we prove Theorems \ref{thm:ricci<0} and \ref{thm:uc of b closed}.
In Section 3 we first recall the classification of smooth fiber bundles. Then we prove Theorem \ref{thm:all B} by using certain pull-back bundles.

\subsection*{Acknowledgements}
The author would like to express her deep gratitude to her advisor Jixiang Fu for constant support.

\section{Proof of Theorems \ref{thm:ricci<0} and \ref{thm:uc of b closed}}
Theorem \ref{thm:uc of b closed} is proved by applying method developed in the proof of \cite[Proposition1.4]{fg16}.

Recall The center Theorem in Lawson and Yau \cite{ly72}.
\begin{proposition}[The center Theorem]
Let $M$ be an occ manifold with $K\leq 0$.
Then $Center(\pi_1(M))=k\mathbb Z$ for some $k\geq 0$
and there exists an covering 
$T^k\times M'\rightarrow M$,
where $M'$ is an occ $K\leq 0$ manifold.
\end{proposition}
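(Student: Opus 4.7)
The plan is to lift to the universal cover $\tilde M$, a Hadamard manifold on which $\Gamma := \pi_1(M)$ acts freely, properly discontinuously, and cocompactly by isometries. Because $M$ is compact and $\Gamma$ is torsion-free, every nontrivial $\gamma \in \Gamma$ is an axial (hyperbolic) isometry: the displacement function $f_\gamma(x) = d(x,\gamma x)$ is convex, attains a strictly positive infimum $|\gamma|$, and the set $\mathrm{Min}(\gamma) := f_\gamma^{-1}(|\gamma|)$ is a nonempty closed convex subset foliated by parallel axes of $\gamma$.

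The key step is to show that $\mathrm{Min}(\gamma) = \tilde M$ whenever $\gamma \in Z := Z(\Gamma)$. From $\alpha^{-1}\gamma\alpha = \gamma$ for every $\alpha \in \Gamma$, one gets $f_\gamma(\alpha x) = d(\alpha x, \gamma\alpha x) = d(x, \alpha^{-1}\gamma\alpha x) = f_\gamma(x)$, so $f_\gamma$ is $\Gamma$-invariant and therefore descends to a continuous, hence bounded, function on the compact manifold $M$. Restricted to any geodesic of $\tilde M$, $f_\gamma$ becomes a bounded convex function on $\mathbb R$ and is therefore constant; consequently $f_\gamma$ is globally constant, every point of $\tilde M$ lies on an axis of $\gamma$, and the unit tangent field to these axes is a parallel vector field on $\tilde M$. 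Performing this construction on a commuting system of generators of $Z$ produces $k := \mathrm{rank}\,Z$ mutually orthogonal parallel vector fields, and the de Rham decomposition theorem converts them into an isometric splitting $\tilde M \cong \mathbb R^k \times \tilde M'$ on which $Z$ acts as a discrete group of translations on the first factor and trivially on the second. A short properness argument then identifies $Z$ with the $\mathbb Z^k$-lattice of these translations, giving $Z \cong \mathbb Z^k$.

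To descend the splitting to a finite cover of $M$, one invokes de Rham rigidity: every isometry of $\tilde M$ preserves the maximal flat de Rham factor, so $\Gamma$ acts through $\mathrm{Isom}(\mathbb R^k) \times \mathrm{Isom}(\tilde M')$. Because $Z$ is central in $\Gamma$, the image of $\Gamma$ in $\mathrm{Isom}(\mathbb R^k)$ commutes with the cocompact translation lattice $Z \cong \mathbb Z^k$, which forces its rotational part to lie in the finite group $O(k) \cap GL(k,\mathbb Z)$. A suitable finite-index normal subgroup $\Gamma_0 \trianglelefteq \Gamma$ therefore acts on $\mathbb R^k$ by pure translations and on $\tilde M'$ freely and cocompactly, and after passing to a further finite cover one may arrange $\Gamma_0 \cong \mathbb Z^k \times \Gamma'$ so that the associated finite cover of $M$ splits isometrically as $T^k \times M'$, with $M' := \tilde M'/\Gamma'$ an occ manifold of $K \leq 0$. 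The principal obstacle is the identity $\mathrm{Min}(\gamma) = \tilde M$ for central $\gamma$; this is where compactness of $M$ enters decisively via the bounded-convex-function argument, and everything downstream is a packaging of the de Rham theorem with classical Bieberbach-style linear algebra.
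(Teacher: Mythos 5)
The paper does not actually prove this proposition; it is quoted from Lawson--Yau \cite{ly72} with no argument given, so your proposal can only be measured against the standard proof. The first half of your argument is that standard proof and is correct: a central element $\gamma$ has a $\Gamma$-invariant, hence bounded, convex displacement function on the Hadamard cover, so $f_\gamma$ is constant, $\gamma$ is a Clifford translation, the axis fields of the central elements span a parallel distribution, and de Rham gives $\tilde M\cong\mathbb R^k\times\tilde M'$ with $Z:=Center(\pi_1(M))$ a spanning discrete group of translations of the flat factor, whence $Z\cong\mathbb Z^k$ (the paper's ``$k\mathbb Z$'' should be read as $\mathbb Z^k$). Minor imprecisions there (the axis fields need not be mutually orthogonal, only their span matters; $\Gamma$ preserves the splitting because it centralizes $Z$, not because $\mathbb R^k$ is the maximal flat factor) are all repairable.

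The genuine gap is in the descent to a finite cover. Since $\Gamma$ \emph{centralizes} (not merely normalizes) a translation lattice spanning $\mathbb R^k$, the relation $Av=v$ for a spanning set of $v$ forces the rotational part of every element to be trivial outright, so your ``kill the finite rotation group by passing to a finite-index subgroup'' step is vacuous and does not touch the real obstruction, which sits in the translation parts: one gets a homomorphism $\rho:\Gamma\to\mathbb R^k$ with $\rho(Z)$ a lattice $L$, but $\rho(\gamma)$ for non-central $\gamma$ need not lie in $L$, and no passage to finite-index subgroups fixes this. Concretely, let $\Sigma=\mathbb H^2/G$ be a closed hyperbolic surface and $\phi:G\to\mathbb R$ a homomorphism with $\phi(G)\not\subset\mathbb Q$; then $\Gamma=\{(n+\phi(g),g):n\in\mathbb Z,\ g\in G\}$ acts freely, properly discontinuously and cocompactly on $\mathbb R\times\mathbb H^2$ with center $\mathbb Z$, yet $\rho(\Gamma_0)$ is dense in $\mathbb R$ for every finite-index $\Gamma_0\leq\Gamma$, so \emph{no} finite cover of the quotient splits isometrically as $S^1\times\Sigma'$. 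Your claimed isometric splitting of a finite cover is therefore false; the theorem only asserts a covering map $T^k\times M'\to M$ (equivalently a diffeomorphism of a finite cover), and producing it requires the additional argument of \cite{ly72}: roughly, that the central extension $1\to\mathbb Z^k\to\Gamma\to\Gamma/Z\to 1$ virtually splits as a direct product (its class is torsion because $\rho$ splits it over $\mathbb R^k$), together with a construction converting that algebraic product decomposition into a smooth, not metric, product structure. That second half is missing from your proposal.
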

Therefore, Theorem \ref{thm:ricci<0} is a straight forward corollary of Theorem \ref{thm:uc of b closed}.
Recall that each closed manifold admits a triangulation, i.e., a homeomorphism with the support of a finite simplicial complex according to \cite{c35} and \cite{w40}.
Note that a finite simplicial complex admits a nature CW-complex structure.
\begin{definition}
Let $M\rightarrow E\stackrel{p}{\rightarrow}B$ be a smooth bundle. 
We call it fiber homotopically trivial if there exists a continuous map
$q:E\rightarrow M$
such that 
$q|_{M_x}:M_x\rightarrow M$
is a homotopy equivalence for all $x\in B$
\end{definition}
\begin{remark}
Note that according to \cite{dl59}, if $M\rightarrow E\stackrel{p}{\rightarrow}B$ is fiber homotopically trivial with $M$ and $B$ being occ manifolds.
Then $E$ is homotopy equivalent to $M\times B$.
\end{remark}

\begin{proof}[Proof of Theorem \ref{thm:uc of b closed}]
Denote the universal covering of $B$ by $\pi:\tilde B\rightarrow B$.
Consider the pullback bundle
\begin{equation*}
    \begin{CD}
    \pi^\ast E @>>>E\\
    @VVV @VVV\\
    \tilde B @>\pi>>B.    
    \end{CD}
\end{equation*}
With $\tilde B$ being closed, $\pi:\tilde B\rightarrow B$ is a finite-sheeted covering.
Hence $\pi^\ast E\rightarrow E$ is also a finite-sheeted covering.
Since simplicial volume is multiplicative with respect to finite coverings, 
we can assume that $B$ is simply connected.
As $B$ is a closed manfiold, there is a finite simplicial complex $K$ and a homeomorphism $r:K\rightarrow B$.
Consider the pullback bundle 
\begin{equation*}
    \begin{CD}
    r^\ast E @>>>E\\
    @VVV @VVV\\
    K @>r>>B.
    \end{CD}
\end{equation*}
Then $r^\ast E$ is homeomorphism to $E$.
As simplicial volume is a topological invariant, we consider $r^\ast E$ from now on.

Let $K^1$ be the $1$-skeleton of K. 
Denote the inclusion map by $\sigma:K^1\rightarrow K$.
With $K$ being simply connected, the inclusion map
$\sigma:K^1\rightarrow K$ is homotopic to a constant map 
$c:K^1\rightarrow \{x_0\}\subset K$ through map $H:K^1\times [0,1]\rightarrow K$.
With $r^\ast E\rightarrow K$ being a fibration, it satisfies the covering homotopy theorem.
Hence $H$ is covered by a map 
$\tilde{H}:p^{-1}(K^1)\times [0,1]\rightarrow r^\ast E$, 
with 
$\tilde{H}(\cdot,0):p^{-1}(K^1)\hookrightarrow r^\ast E$ being the inclusion map and 
$\tilde{H}(\cdot,1):p^{-1}(K^1)\rightarrow p^{-1}(x_0)\subset r^\ast E$.
Note that $\tilde H (\cdot,\cdot)|_{p^{-1}(x)\times [0,1]}
:p^{-1}(x)\times [0,1]\rightarrow p^{-1}(x_0)\subset r^\ast E$ is a homotopy equivalence for each $x\in K^1$.

Denote the space of self homotopy equivalences of $M$ by $G(M)$.
According to \cite{g65}, we have
$$
\pi_n(G(M))=\left\{
\begin{aligned}
&Center(\pi_1(M)), \ \ \ \ & if\ n=1;  \\
&\{e\}, \ \ \ \ & if\ n\geq 2. 
\end{aligned}
\right.
$$
By assumption, we have $Center(\pi_1(M))=\{e\}$.
Hence we can inductively extend 
$\tilde{H}(\cdot,1):p^{-1}(K^1)\rightarrow p^{-1}(x_0)$
to successive skeletons of $K$ on each cell until we obtain a continuous map 
$q:r^\ast E\rightarrow p^{-1}(x_0)$ such that $q|_{p^{-1}(x)}:p^{-1}(x)\rightarrow p^{-1}(x_0)$ is a homotopy equivalence for each $x\in K$.
Therefore, $r^\ast E$ is homotopy equivalence to $M\times B$.
As $\pi_1(B)$ is amenable , $\|B\|=0$.
Using (\ref{2021052101}), we have
$$\|E\|=\|r^\ast E\|\leq \tbinom{\dim E}{\dim M}\|M\|\times\|B\|=0.$$
Therefore, $\|E\|=0$.
\end{proof}

\section{Proof of Theorem \ref{thm:all B}}
Theorem \ref{thm:all B} is proved by applying method developed in the proof of \cite[Theorem F]{bfj16}.
\begin{definition}
Let $p_1:E_1\rightarrow B$ and $p_2:E_2\rightarrow B$ be two fiber bundles.
They are called fiber homotopy equivalent if there exists fiber-preserving maps 
$\varphi:E_1\rightarrow E_2$ and 
$\psi:E_2\rightarrow E_1$
such that $\psi\circ\varphi\simeq Id_{E_1}$ and
$\varphi\circ\psi\simeq Id_{E_2}$ through fiber-preserving homotopies $H_1$ and $H_2$,
respectively.
\end{definition}
\begin{remark}
Let
$$
\text{Diff}(M)=\{f:M\rightarrow M\text{ diffeomorphism}\}.
$$
By \cite[Corollary 7.6]{dl59}, 
smooth fiber bundle $M\rightarrow E\rightarrow B$
is homotopy equivalent to the pull-back bundled of some bundle $\tilde E\rightarrow \text{BDiff}(M)$ 
through some map $f:B\rightarrow \text{BDiff}(M)$ (which is called the classifying map).
There is a natural map c.
Two smooth fiber bundles is fiber homotopy equivalent if and only if their classifying map composing with 
$\Phi:\text{BDiff}(M)\rightarrow BG(M)$ is homotopy equivalent.
\end{remark}
\begin{definition}
Let $G$ be a groups. 
Let $Aut(G)$ be the automorphism groups of $G$ and $Inn(G)$ be the subgroup of $Aut(G)$ consisting of inner automorphisms.
The outer automorphism group of $G$ is the quotient group
$Aut(G)/Inn(G)$, denoted by $Out(G)$.
\end{definition}
\begin{remark}\label{re:out}
Note that the $0$-th homotopy group $\pi_0(G(M))$ is the set of homotopy classes of self homotopy equivalences of $M$.
Denote the projection by $\rho:G(M)\rightarrow \pi_0(G(M))$.
For an Eilenberg–MacLane CW-complex $K(\pi,1)$, it is well-known that there exists a isomorphism
$\eta:\pi_0(G(K(\pi,1)))\rightarrow Out(\pi)$(see \cite[Section 4.A]{h02} for more detail).
\end{remark}
Now, let $G$ be a topology group.
According to \cite{dl59}, there is a universal principal $G$-bundle
\begin{equation*}
    \begin{CD}
    G @>>>E_G\\
    @. @VVV \\
    @. B_G,
    \end{CD}
\end{equation*}
where $\pi_i(E_G)=0$ for all $i\in\mathbb N$.
Hence $G$ acts on $E_G$ and $B_G=E_G/G$.
Let $H<G$ be a subgroup.
Then the action of $G$ on $E_G$ can be restricted to an action of $H$ on $E_G$.
Let $H_1<H_2<G$ be two subgroup.
Then inclusion $H\hookrightarrow G$ induces a nature map $E_G/H_1\rightarrow E_G/H_2$
such that 
\begin{equation*}
    \begin{CD}
    H_2/H_1 @>>> E_G/H_1\\
    @. @VVV\\
    @.       E_G/H_2
    \end{CD}
\end{equation*}
is a topological fiber bundle.
A homomorphism  $\gamma:G\rightarrow G'$ between two topology groups induces map between fiber bundles
\begin{equation*}
    \begin{CD}
    E_G @>\tilde{\gamma}>> E_{G'}\\
    @VVV @VVV\\
    B_G @>\bar{\gamma}>> B_{G'}.
    \end{CD}
\end{equation*}
Hence we have the following commutative diagram
\begin{equation*}
\begin{tikzcd}
\cdots \arrow[r] & \pi_k(G) \arrow[r] \arrow[d, "\pi_k(\gamma)"] & \pi_k(E_G) \arrow[r] \arrow[d, "\pi_k(\tilde \gamma)"] & \pi_k(B_G) \arrow[r] \arrow[d, "\pi_k(\bar\gamma)"] & \cdots \arrow[r] & \pi_0(E_G) \arrow[d, "\pi_0(\tilde\gamma)"] \\
\cdots \arrow[r] & \pi_k(G') \arrow[r]                           & \pi_k(E_{G'}) \arrow[r]                                & \pi_k(B_{G'}) \arrow[r]                             & \cdots \arrow[r] & \pi_0(E_{G'}).                              
\end{tikzcd}
\end{equation*}
Hence if $\gamma$ is a weak homotopy equivalence, so is $\bar\gamma$.

We list \cite[Proposition 4.22]{h02} down below.
\begin{lemma}[Hatcher]\label{lem:hatcher}
Let $X$ be a CW-complex. 
Then a weak homotopy equivalence $Y\rightarrow Z$ induces bijection $[X,Y]\rightarrow [X,Z]$,
where the notation $[\cdot,\cdot]$ stands for the set of homotopy classes of maps between two spaces.
\end{lemma}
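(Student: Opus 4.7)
The plan is to reduce the lemma to the classical compression (or extension) lemma for CW-pairs, which asserts that if $(X,A)$ is a CW-pair and $(W,Y)$ is a pair with $\pi_n(W,Y)=0$ for every $n$ such that $X\setminus A$ has a cell of dimension $n$, then every map $(X,A)\to(W,Y)$ is homotopic rel $A$ to a map into $Y$. The bulk of the work lies in this compression lemma; I would prove it cell-by-cell, inductively extending a homotopy over the skeleta of $X$. At each stage one has to deform a characteristic map $(D^n,S^{n-1})\to(W,Y)$, already mapping $S^{n-1}$ into $Y$, into a map with image in $Y$; the obstruction lives in $\pi_n(W,Y)$ and vanishes by hypothesis. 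Since infinitely many extensions may be needed, one must carry them out compatibly on all cells and invoke that $X\times I$ has the weak topology so that the resulting homotopy is continuous.

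To apply this to the situation of the lemma, I would first replace the weak equivalence $f\colon Y\to Z$ by a cofibration by passing to the mapping cylinder
\[
M_f = (Y\times[0,1])\sqcup Z\big/\bigl((y,1)\sim f(y)\bigr),
\]
so that $Y$ sits as a subspace of $M_f$ and $M_f$ deformation retracts onto $Z$. Since $f$ is a weak homotopy equivalence, so is the inclusion $Y\hookrightarrow M_f$, and the long exact sequence of the pair then yields $\pi_n(M_f,Y)=0$ for every $n\geq 0$. The induced map $[X,Y]\to[X,Z]$ factors, up to the canonical bijection $[X,Z]\cong[X,M_f]$, through the map induced by $Y\hookrightarrow M_f$, so it suffices to prove that $[X,Y]\to[X,M_f]$ is a bijection.

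Surjectivity is the case $A=\emptyset$ of the compression lemma applied to $(X,A)$ and $(M_f,Y)$: any map $X\to M_f$ is homotopic to one factoring through $Y$. For injectivity, I would promote the CW-structure to the product $X\times I$, with subcomplex $X\times\{0,1\}$, and apply the compression lemma to the pair $(X\times I,\,X\times\{0,1\})$: given $g_0,g_1\colon X\to Y$ and a homotopy $H\colon X\times I\to M_f$ from $g_0$ to $g_1$ (living in $Y$ on $X\times\{0,1\}$), the lemma deforms $H$ rel $X\times\{0,1\}$ into a homotopy taking values in $Y$, yielding $g_0\simeq g_1$ inside $Y$.

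The main technical obstacle is the inductive cell-by-cell deformation inside the compression lemma, in particular checking continuity after performing countably many extensions in the non-finite-dimensional case; this is handled by doing the extension on the $n$-skeleton during the time interval $[1-2^{-n},1-2^{-n-1}]$ and exploiting the weak topology on $X\times I$. Every other step (the mapping cylinder replacement, the long exact sequence argument giving $\pi_n(M_f,Y)=0$, and the reduction of injectivity to a relative version of surjectivity) is essentially formal.
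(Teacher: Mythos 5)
Your argument is correct and is precisely the standard proof of Hatcher's Proposition 4.22, which is all the paper does here (the lemma is quoted from \cite{h02} without proof): replace $f$ by the inclusion into the mapping cylinder, deduce $\pi_n(M_f,Y)=0$ from the weak equivalence, and apply the compression lemma to $(X,\emptyset)$ for surjectivity and to $(X\times I, X\times\{0,1\})$ for injectivity, with the $[1-2^{-n},1-2^{-n-1}]$ reparametrization handling continuity of the infinite cell-by-cell deformation. No gaps; this matches the cited source's proof in every essential respect.
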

\begin{proof}[Proof of Theorem \ref{thm:all B}]
Let 
$$
\text{Diff}_0(M)=\{f\in\text{Diff}(M)|f\text{ is homotopic to }id_M\}.
$$
As 
\[
\text{Diff}_0(M)\hookrightarrow\text{Diff}(M)\hookrightarrow G(M),
\]
we can define
\[
\text{BDiff}_0(M)=E_{G(M)}/\text{Diff}_0(M)
\]
and
\[
\text{BDiff}(M)=E_{G(M)}/\text{Diff}(M).
\]
Hence we have a topological fiber bundle
\begin{equation*}
    \begin{CD}
\text{Diff}(M)/\text{Diff}_0(M) @>>>\text{BDiff}_0(M)\\
@. @VVV\\
@. \text{BDiff}(M)
    \end{CD}
\end{equation*}
and a induced map $\pi_{\text{BDiff}(M)}:\text{BDiff}(M)\rightarrow B_{G(M)}$.
According to Whitney’s approximation theorem \cite{w34} and Remark \ref{re:out}, there is a one-to-one map
\[
\text{Diff}(M)/\text{Diff}_0(M)\rightarrow \pi_0(G(M))\simeq Out(\pi_1(M)).
\]
By assumption $Out(\pi_1(M))$ is finite, hence the fiber $\text{Diff}(M)/\text{Diff}_0(M)$ is finite.
Therefore $\text{BDiff}_0(M)\rightarrow\text{BDiff}(M)$ is a finite sheet covering map.

Let $g:B\rightarrow \text{BDiff}(M)$ be the classifying map of
$q:E\rightarrow B$ (i.e. $E$ is the pull-back bundle of a bundle $\tilde E\rightarrow \text{BDiff}(M)$ through $g$).
Consider the following pull-back bundle
\begin{equation}\label{cd:B'}
\begin{CD}
B'@>\pi_{B'}>> \text{BDiff}_0(M)\\
@V\theta VV @V\pi_{\text{BDiff}_0(M)}VV\\
B@>g>> \text{BDiff}(M).
\end{CD}
\end{equation}
Hence $\theta:B'\rightarrow B$ is a finite sheet covering.

Consider the following pull-back bundle
\begin{equation*}
    \begin{CD}
    \theta^\ast E @>>>       E\\
    @VVV                     @VVV\\
    B'      @>\theta>> B.
    \end{CD}
\end{equation*}
Note that we have the commutative diagram (\ref{cd:B'}).
Hence the pull-back bundle
\begin{equation*}
\begin{tikzcd}
(\pi_{\text{BDiff}_0(M)}\circ\pi_{B'})^\ast\tilde E \arrow[d] \arrow[rr] &                                                        & \tilde E \arrow[d] \\
B' \arrow[r, "\pi_{B'}"]                                                 & \text{BDiff}_0(M) \arrow[r, "\pi_{\text{BDiff}_0(M)}"] & \text{BDiff}(M)   
\end{tikzcd}
\end{equation*}
is the same as the pull-back bundle
\begin{equation*}
\begin{tikzcd}
\theta^\ast E \arrow[d] \arrow[r] & E \arrow[r] \arrow[d] & \tilde E \arrow[d] \\
B' \arrow[r, "\theta"]                          & B \arrow[r, "g"]      & \text{BDiff}(M).  
\end{tikzcd}
\end{equation*}

Notice the composition map  
$$
\text{Diff}_0(M)\hookrightarrow \text{Diff}(M)\hookrightarrow G(M)\rightarrow \pi_0(G(M))
$$
is a constant map. 
Hence map $\text{BDiff}_0(M)\rightarrow B_{\pi_0(G(M))}$
factors through $E_{\pi_0(G(M))}$.
Notice that in the proof of \cite[Theorem 3.5]{dl59}, Dold and Lashof mentioned that a continous map from a compact space to $E_{\pi_0(G(M))}$ is null-homotopic.
Hence $\pi_{B'}:B'\rightarrow E_{G(M)}$ is null-homotopic.
Therefore, the map
\[
B'\rightarrow \text{BDiff}_0(M)\rightarrow\text{BDiff}(M)\rightarrow B_{G(M)}\rightarrow B_{\pi_0(G(M))}
\]
is null-homotopic.
Notice that $G(M)\rightarrow\pi_0(G(M))$ is weak homotopy equivalence.
Hence
$$
B_{G(M)}\rightarrow B_{\pi_0(G(M))}
$$
is a weak homotopy equivalence.
According to Lemma \ref{lem:hatcher}, $\theta^\ast E$ is fiber homotopy equivalence to $M\times B$.
Hence
\[
\|\theta^\ast E\|=\|M\times B\|
\]
By \ref{2021052101}, we have $\|\theta^\ast E\|=0$ if and only if $\|M\|\cdot\|B\|=0$.
Notice that $\theta^\ast E\rightarrow E$ is a finite sheet covering map.
Hence $\|E\|=0$ if and only if $\|M\|\cdot\|B\|=0$.
\end{proof}

\end{document}